\newcommand{\Real}{\operatorname{Re}}
\newcommand{\Img}{\operatorname{Im}}
\newcommand{\R}{{\mathbb R}}
\newcommand{\C}{{\mathbb C}}
\newcommand{\Rnxn}{{\R^{n\times n}}}
\newcommand{\Cnxn}{{\C^{n\times n}}}
\newcommand{\bmat}[1]{ \begin{bmatrix}#1\end{bmatrix}}
\newcommand{\smat}[1]{ \left[\begin{smallmatrix} #1 \end{smallmatrix}\right]}
\newcommand{\cont}{{\mathcal C}}
\newcommand{\diag}{\operatorname{diag}}
\newcommand{\var}{\operatorname{var}}
\newcommand{\abs}[1]{\left| #1 \right|}
\renewcommand{\ss}{\scriptstyle}
\def\sddots{\mathinner{\raise3pt\vbox{\hbox{$\ss .$}}
		\raise1.5pt\hbox{$\ss .$}\hbox{$\ss .$}}}
\theoremstyle{plain}
\newtheorem{thm}{Theorem}[section]
\newtheorem{lem}[thm]{Lemma}
\newtheorem{claim}[thm]{Claim}
\theoremstyle{definition}
\newtheorem{rem}[thm]{Remark}
\newtheorem{rems}[thm]{Remarks}
\newcounter{algo}[section]
\renewcommand{\thealgo}{\thesection.\arabic{algo}}
\newcommand{\algo}[3]{\refstepcounter{algo}
\begin{center}\begin{figure*}[h!]
\framebox[\textwidth]{
\parbox{0.95\textwidth} {\vspace{\topsep}
{\bf Algorithm \thealgo : #2}\label{#1}\\
\vspace*{-\topsep} \mbox{ }\\
{#3} \vspace{\topsep} }}
\end{figure*}\end{center}}
\begin{document}

\title{Takagi factorization of matrices \\ depending on parameters and \\
locating degeneracies of singular values}

\author[Dieci]{Luca Dieci}
\address{School of Mathematics, Georgia Institute of Technology,
Atlanta, GA 30332 U.S.A.}
\email{dieci@math.gatech.edu}
\author[Papini]{Alessandra Papini}
\address{Dept. of Industrial Engineering, University of Florence, viale G. Morgagni 40-44, 50134 Florence, Italy}
\email{alessandra.papini@unifi.it}
\author[Pugliese]{Alessandro Pugliese}
\address{Dept. of Mathematics, University of Bari ``Aldo Moro'', Via Orabona 4, 70125 Bari, Italy}
\email{alessandro.pugliese@uniba.it}
\subjclass{15A18, 15A23, 65F15, 65F99.}

\keywords{Matrices depending on parameters, coalescence of singular values, loss of rank, Takagi factorization}

\begin{abstract}
In this work we consider the Takagi factorization of a matrix valued function
depending on parameters.  We give smoothness and genericity results and
pay particular attention to the concerns caused by having
either a singular value equal to $0$ or multiple singular values.  For
these phenomena, we give theoretical results showing that their co-dimension
is $2$, and we further develop and test numerical methods to locate in parameter
space values where these occurrences take place.  Numerical study of the density
of these occurrences is performed.
\end{abstract}


\maketitle

\pagestyle{myheadings}
\thispagestyle{plain}
\markboth{L.~Dieci, A.~Papini, A.~Pugliese.}{Takagi factorization}

\section{Introduction. Takagi factorization of a matrix.}\label{Intro}
The Takagi factorization is concerned with complex symmetric matrices, that is one has 
$A\in \Cnxn$, $A^T=A$, and seeks the factorization
$A=USU^T$, where $U\in \Cnxn$ is
unitary and $S$ is diagonal with nonnegative and ordered
entries.  That is:
$$U:\,\ U^*U=\bar U^T U=I\ ,\quad 
S=\diag(\sigma_1,\dots, \sigma_n), \,\ \sigma_1\ge \dots \ge \sigma_n\ge 0\ .$$
The values $\sigma_1,\dots, \sigma_n$ are the singular values of $A$ and $S$ is indeed the standard matrix $\Sigma$ in the SVD of $A$ (see below).

The Takagi factorization has been used in the study of inhomogeneous plane waves in 
\cite{Hayes:planewaves}, it has shown to be of
relevance in the quantum mechanical formulation of decay phenomena in 
\cite{Reid:complexsymmetric}, and it has also recently been used in the Physics
literature in connection to the Bloch-Messiah reduction, to which is effectively
equivalent; see \cite{BlochMessiah1} and \cite{BlochMessiah2}, and we note that the latter
work is in particular interested in studying the structural changes brought upon by
degeneracies (i.e., multiple singular values), which is precisely the topic we address in the
present work.

The existence of a Takagi factorization of a matrix is a well known fact; e.g., see
\cite{HJ1}, or \cite{BG}, where it is called {\emph{symmetric SVD}}, or \cite{BlochMessiah1}.
For completeness, we next state a result about its existence, further clarifying the
degree of uniqueness of the factorization.

\bigskip
\begin{lem}[Takagi Factorization]\label{UniquenessTakagi}$\,$\\
	{\bf Existence}.
	Let $A\in \Cnxn$ be symmetric: $A=A^T$.  Then, $A$ admits a Takagi factorization
	$A=USU^T$, where $U$ is unitary and 
	$S$ is the diagonal matrix of singular values of $A$, which we will
	take to be ordered: $S=\diag(\sigma_1,\dots, \sigma_n), \,\ \sigma_1\ge \dots \ge \sigma_n\ge 0$.\\
	{\bf Uniqueness}.
	Next, let $A=Q S V^*$ be an SVD of $A$.
	Assume that there are $p$ distinct singular values $\sigma_j$, of multiplicity $n_j$, $j=1,\dots, p$, 
	so that $S=\diag(\sigma_1 I_{n_1}, \dots, \sigma_p I_{n_p})$, $n_1+\dots + n_p=n$.  
	Then, all possible Takagi factorizations are obtained by choosing $U=QK$, where $K$ is
	a block-diagonal unitary matrix, $K=\smat{K_{11} & & \\ & \sddots & \\ & & K_{pp}}$, and each 
	$K_{jj}\in \C^{n_j \times n_j}$	is unitary ($K_{jj}^*K_{jj}=I_{n_j}$, $j=1\dots, p$).  Moreover, relative
	to the nonzero singular values, $K_{jj}$ must satisfy 
	$K_{jj}^2=\bar H_{jj}$, where $H=V^TQ$, whereas if $\sigma_p=0$, then $K_{pp}$ can be any 
	unitary matrix in $\C^{n_p\times n_p}$.
\end{lem}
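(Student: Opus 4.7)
My plan is to prove existence via the spectral decomposition of the Hermitian matrix $M := A\bar A$ together with an antilinear structure on each of its eigenspaces, and to prove uniqueness by combining that spectral picture with the standard SVD indeterminacy.

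\emph{Existence.} Since $A = A^T$ yields $A^* = \bar A$, the matrix $M = A\bar A = AA^*$ is Hermitian positive semidefinite, with eigenvalues $\sigma_j^2$. On each eigenspace $E_\sigma$ with $\sigma>0$, I define the antilinear map $\phi(v) := \sigma^{-1}A\bar v$ and verify, using $A=A^T$ and $Mv=\sigma^2 v$, that $\phi$ maps $E_\sigma$ to itself, is an involution ($\phi^2 = \mathrm{id}$), and satisfies $\langle \phi v,\phi w\rangle = \overline{\langle v,w\rangle}$ (antiunitarity). The decisive ingredient is then an analogue of the real spectral theorem: any antiunitary involution on a complex inner-product space admits a $\C$-orthonormal basis of fixed vectors. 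I would prove this by observing that the fixed-point set $F$ is an $\R$-subspace, that $E_\sigma = F \oplus iF$ as $\R$-spaces (so $\dim_\R F = \dim_\C E_\sigma$), and that $\langle\cdot,\cdot\rangle$ restricted to $F$ is $\R$-valued, whereupon real Gram--Schmidt on $F$ produces the basis. The fixed-point relation $\phi(u)=u$ is equivalent to $A\bar u = \sigma u$. On $\ker M$ (if $\sigma_p = 0$), the identity $\ker M = \ker A^*$ makes $A\bar u = 0$ automatic for any orthonormal basis. Collecting all the $u$'s into a unitary $U$ and using $\bar U U^T = I$ gives $A = A\bar U U^T = USU^T$.

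\emph{Uniqueness.} Given any Takagi factorization $A=USU^T$, a direct calculation (using $U^T\bar U = I$) yields $A\bar A = US^2 U^*$, so the columns of $U$ form an orthonormal eigenbasis of $A\bar A$; the columns of $Q$ from the given SVD form another such eigenbasis. Spectral uniqueness then forces $U = QK$ for a unitary $K$ that is block-diagonal in the partition by distinct eigenvalues of $S^2$. Substituting $U = QK$ into $USU^T = QSV^*$ and cancelling $Q$ on the left and $Q^T$ on the right produces the matrix equation $KSK^T = S\bar H$ with $H = V^T Q$. Block-diagonality of $K$ reduces this, block by block, to $\sigma_j K_{jj}K_{jj}^T = \sigma_j \bar H_{jj}$: for $\sigma_j>0$ this is the stated relation $K_{jj}^2 = \bar H_{jj}$, while for $\sigma_p=0$ there is no constraint and $K_{pp}$ can be any unitary in $\C^{n_p\times n_p}$. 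As a consistency check, the off-diagonal blocks of $KSK^T$ vanish, which forces $\bar H_{ij}=0$ whenever $i\neq j$ and $\sigma_i>0$; this structural property of $H$ can also be derived directly from $H$ being unitary and the relation $H\Sigma=\Sigma H^T$.

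The principal obstacle is the antiunitary-involution fact used in the existence argument: this genuinely requires the splitting of each eigenspace into its $\phi$-fixed and anti-fixed $\R$-parts and is not a bookkeeping step. Once that lemma is available, the remainder of both halves reduces to direct computations exploiting $A = A^T$ and the block-diagonal structure of unitary matrices commuting with $S^2$.
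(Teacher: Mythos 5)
The paper states this lemma without proof (it defers to Horn--Johnson and Bunse-Gerstner--Gragg), so yours is a self-contained argument rather than a variant of an in-text one. The existence half is the classical Autonne--Takagi construction: the antiunitary involution $\phi(v)=\sigma^{-1}A\bar v$ on each eigenspace of $M=A\bar A=AA^*$, the decomposition $E_\sigma=F\oplus iF$ of each eigenspace into the fixed set of $\phi$ and $i$ times it, real Gram--Schmidt on $F$, and the observation that $\ker M=\ker A^*$ handles a zero singular value. Every step you list checks out, and assembling $A\bar U=US$ into $A=USU^T$ via $\bar UU^T=I$ is correct.

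The uniqueness half contains one genuine slip. Substituting $U=QK$ into $USU^T=QSV^*$ gives, exactly as you compute, $KSK^T=S\bar H$, whose $(j,j)$ block for $\sigma_j>0$ is $K_{jj}K_{jj}^T=\bar H_{jj}$. This is \emph{not} ``the stated relation $K_{jj}^2=\bar H_{jj}$'' unless $K_{jj}$ happens to be symmetric, which is automatic only when $n_j=1$. For $n_j>1$ the two conditions genuinely differ: take $A=I_2$, $Q=V=I$, so $H=I$. Then $K=\smat{0&-1\\1&0}$ satisfies $KK^T=I$, so $U=QK$ is a valid Takagi factor, yet $K^2=-I\neq\bar H$; conversely $K=\smat{0&i\\-i&0}$ is unitary with $K^2=I=\bar H$ but $KK^T=-I$, so $U=QK$ gives $UU^T=-I\neq A$. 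In other words, the condition you actually derive, $K_{jj}K_{jj}^T=\bar H_{jj}$, is the correct characterization of the admissible blocks, while the condition printed in the lemma is neither necessary nor sufficient on blocks of size $n_j>1$; you should state your condition as what it is and flag the discrepancy with the statement (they coincide in the scalar case, and more generally if one restricts to \emph{symmetric} unitary square roots of the symmetric unitary matrix $\bar H_{jj}$). Your closing consistency check is fine; the block-diagonality of $H$ is most cleanly seen from $A^*A=\overline{AA^*}$, which forces $V=\bar QG$ with $G$ block diagonal and hence $H=V^TQ=G^T$.
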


It must be noted that in Lemma \ref{UniquenessTakagi} any unitary square root $K_{jj}$ of $\bar H_{jj}$
can be selected to obtain a Takagi factorization.

Finally, below we give a simple algorithm, Algorithm \ref{TakAlgo}, to obtain a Takagi factorization 
when $A$ is nonsingular and with distinct singular values.
The construction below was proposed in \cite[Exercise 15, p.423]{HJ1} and further
used in \cite{CT-Takagi}, to obtain the Takagi factorization of $A$, 
and is based on the SVD of $A$.  An alternative numerical method is given in 
\cite{XuQiao-Takagi}.

\algo{TakAlgo}{Takagi factorization via the SVD} {
	Input:  $A\in \Cnxn$ nonsingular, symmetric: $A=A^T$, and with
	distinct singular values. \\[1ex]
	Output:  $U\in \Cnxn$ unitary, and $S=\diag(\sigma_1,\dots, \sigma_n)$,
	such that $\sigma_1> \sigma_2>  \dots > \sigma_n>0$
	and $A=USU^T$.\\ 
\\	
	1. Let $A=U\Sigma V^*$ be the SVD of $A$ with $U,V\in \Cnxn$ unitary,
	and ordered singular values  in $\Sigma$.  Let $S=\Sigma$ and
	observe that $U^*A\bar U$ is diagonal.\\
	2. Let $\Phi=\diag(\phi_1, \dots, \phi_n)$, where $\phi_k\in \R$, and solve
	the system $\Sigma e^{2i\Phi} =U^*A\bar U$ to obtain 
	$e^{2i \phi_k}=\frac{(U^*A\bar U)_{kk}}{\sigma_k}$, $k=1,\dots, n$.
	Take a square root of $e^{2i \Phi}$ and obtain $e^{i\Phi}$. (Any complex
	square root is a legitimate choice). \\
	3.  Correct the unitary factor $U$: $U\gets Ue^{i\Phi}$.\\
}

\begin{rem}
Note that, in the case of $A$ invertible, and distinct singular values, 
	the factors $S$ and $U$ in a Takagi factorization of $A$ are unique
    up to the signs of the columns of $U$.  
\end{rem}

In this work we are interested in studying the Takagi factorization of a complex and
symmetric matrix valued function {\bf smoothly depending on parameters}.   We will
be concerned with two issues: (i) smoothness of the factors, and (ii) degenerate
(i.e., multiple or zero) singular values.  In the latter case, we will also device (and test) numerical
methods to find the parameter values where degeneracies occur.
A plan of the paper is as follows.
In Section \ref{smoothness} we will discuss the general concern of smoothness with
respect to parameters, and the crucially important concern of the codimension of
having a pair of degenerate singular values, showing that the codimension is 2 both for equal singular values and a zero singular value.
In Section \ref{1paraDEs} we will look at the
1-parameter case, derive differential equations for the factors, and further use
these in the context of a predictor-corrector algorithm to compute a smooth path
of Takagi factorizations.  Finally, in Section \ref{Numerics} we report on numerical
experiments aimed at computing parameter values where matrices smoothly depending on 2 parameters have degenerate singular values.

\section{Smoothness results, degenerate singular values, codimension.}\label{smoothness}
We first give a general result on the co-dimension of having multiple singular values, or
a $0$ singular value.

\begin{thm}\label{Cod2}
Consider $A\in \Cnxn$, symmetric.  Then, having two equal singular values is a (real) co-dimension 2
phenomenon.  Furthermore, having one singular value equal to $0$ is also a co-dimension 2
phenomenon.  
\end{thm}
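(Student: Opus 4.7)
The plan is to reduce to a diagonal model via the Takagi factorization, then count conditions: for the equal-pair case via a first-order perturbation on an isolated $2{\times}2$ block, and for the zero-singular-value case via the differential of the determinant.

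Suppose first that $A_0$ is complex symmetric with $\sigma_k(A_0)=\sigma_{k+1}(A_0)=\sigma>0$, the other singular values being simple and different from $\sigma$. Fix a Takagi factorization $A_0=U_0 S_0 U_0^T$ (Lemma~\ref{UniquenessTakagi}) and parametrize nearby complex symmetric matrices as $A=U_0(S_0+E)U_0^T$ with $E$ complex symmetric. Because singular values are invariant under the congruence $A\mapsto U_0 A U_0^T$, it suffices to count the codimension of the set of $E$ for which $S_0+E$ reproduces the coalescence. Since $\sigma$ is isolated from the rest of the spectrum of $S_0$, gap-based perturbation theory implies that the two singular values branching from $\sigma$ agree, to first order in $E$, with those of $B=\sigma I_2+E^{(2)}$, where $E^{(2)}=(e_{ij})_{i,j\in\{k,k+1\}}$ is itself complex symmetric. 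Using $(E^{(2)})^*=\overline{E^{(2)}}$ I then compute
\[
B^*B=\sigma^2 I_2+2\sigma\,\Real(E^{(2)})+O(\|E\|^2),
\]
so the two branches agree precisely when the real symmetric matrix $\Real(E^{(2)})$ is a scalar multiple of $I_2$, i.e., when
\[
\Real(e_{kk})-\Real(e_{k+1,k+1})=0\qquad\text{and}\qquad \Real(e_{k,k+1})=0,
\]
giving exactly two independent real conditions.

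For the zero singular value, suppose $\sigma_n(A_0)=0$. Here I exploit that $\sigma_n(A)=0$ is equivalent to $\det A=0$, and that $\det$ is holomorphic on $\Cnxn$. In the $(U_0,S_0)$ coordinates its differential at $A_0$ reads
\[
d(\det)|_{A_0}(E)=\left(\prod_{j<n}\sigma_j\right)e_{nn},
\]
a nonzero $\C$-linear functional, since $A_0$ has rank exactly $n-1$. Therefore $\{\det A=0\}$ is a smooth complex hypersurface near $A_0$, i.e., of real codimension $2$ in the space of complex symmetric matrices.

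To confirm that the codimension is exactly $2$, and not merely at least $2$, I would carry out a matching dimension count using Lemma~\ref{UniquenessTakagi}. In case (a), the image of $(U,S)\mapsto USU^T$ (with $S$ having one double value) has real dimension $(n-1)+n^2-\dim(\mathrm{stab})$; the stabilizer on the $\sigma I_2$ block is $\{K\in U(2):KK^T=I_2\}=O(2)$, of real dimension $1$, so the image has dimension $n(n+1)-2$. In case (b), Lemma~\ref{UniquenessTakagi} identifies the stabilizer on the zero block as the entire $U(1)$, again giving $n(n+1)-2$. The main obstacle is separating the upper bound (codim $\ge 2$ from the perturbation/determinant arguments) from the lower bound (codim $\le 2$); the Takagi-based parameter count settles the latter cleanly.
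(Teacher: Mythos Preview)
Your final paragraph is essentially the paper's entire proof: the authors simply view $W$ as the image of $(U,S)\mapsto USU^T$, exhibit a one-dimensional piece of the stabilizer (the real rotation $Q=\smat{c&s\\-s&c}$ on the double block, respectively a unit scalar $\eta$ on the zero block), and read off image dimension $n^2+n-2$. You go slightly further by identifying the full stabilizer on the $\sigma I_2$ block as $\{K\in U(2):KK^T=I\}=O(2)$ rather than just $SO(2)$, which is correct and sharper, though dimensionally equivalent.

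Your first two parts are not in the paper and constitute a genuinely different, local-analytic route. The determinant argument for the rank-drop case is clean: it delivers codimension exactly $2$ via the implicit function theorem applied to a holomorphic submersion, with no stabilizer bookkeeping, and is arguably more transparent than the paper's count for that case. The $2{\times}2$ block computation for the equal-pair case correctly identifies the two normal directions to $W$ as $\Real(e_{kk}-e_{k+1,k+1})$ and $\Real(e_{k,k+1})$, which is useful geometric information the paper does not extract; on its own, however, it is a first-order statement about the tangent cone and needs the parameter count to finish. One small labeling issue: the parameter count, as both you and the paper present it, naturally bounds $\dim W$ from \emph{above} (hence $\mathrm{codim}\,W\ge 2$), the same direction as your perturbation argument; the opposite inequality comes from the uniqueness clause of Lemma~\ref{UniquenessTakagi}, which guarantees the map is injective modulo exactly the stabilizer you computed. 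You have that ingredient, just with the inequality attributed backwards.
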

\begin{proof}
Note that the (real) dimension of the space of
complex symmetric matrices is $2(n(n-1)/2+n)=n^2+n$.  Now, let $A=USU^T$ be a Takagi
factorization of $A$.  

The set $W$ of complex symmetric matrices with a double singular value is the 
image of the map $(U,S) \to USU^T$ , where $U$ is unitary and the diagonal matrix $S$ has
$\sigma_1=\sigma_2$. 
The dimension of the manifold of unitary matrices is $n^2$.  But, if we take 
$\tilde U = UQ$, $Q=\smat{c & s & \\ -s & c & \\ & & I}$, 
where $c^2 + s^2 = 1$ , then $\tilde US \tilde U^T = USU^T$ and
$\tilde U$ is still unitary. Hence the maximum dimension of the strata of
$W$ is $n^2-1+n-1=n^2+n-2$.  Thus, $W$ has co-dimension two.

Next, consider the set $W$ of complex symmetric matrices with a $0$ singular
value.  This is the image of the map $(U,S) \to USU^T$ , where $U$ is unitary and
the diagonal $S$ has $\sigma_n=0$.  If we now take
$\tilde U = UQ$, $Q=\smat{I_{n-1} & \\ &\eta }$, $\abs{\eta}=1$, 
then $\tilde US \tilde U^T = USU^T$ and
$\tilde U$ is unitary. Hence the maximum dimension of the strata of
$W$ is $n^2+n-1-1=n^2+n-2$.  Thus, also in this case $W$ has co-dimension two.
\end{proof}
\begin{rems}$\,$
\begin{itemize}
\item[(i)]
The result of Theorem \ref{Cod2} is surprising, because for general $A\in \Cnxn$, 
having a pair of equal singular values is
a co-dimension 3 phenomenon!  We note that when $A\in \Rnxn$, having two equal singular values
is a co-dimension 2 phenomenon, and hence the impact of symmetry in this complex case is to
lower the co-dimension of having two equal singular values to be like in the real case.
\item[(ii)]
This lowering of co-dimension phenomenon does not occur insofar as loss of rank ($0$ 
singular value) is concerned.  Indeed,  also for a general $A\in \Cnxn$, having a
$0$ singular value is a co-dimension $2$ phenomenon.  However, for $A\in \Rnxn$ having a
$0$ singular value is a co-dimension $1$ phenomenon.  So, the impact of symmetry in this
complex case is not to bring
the co-dimension of having a $0$ singular value to be equal to that of the real case.
\item[(iii)]
It must be appreciated that having a pair of singular values not just equal {\bf but} equal to a
specific value is a higher co-dimension occurrence.  For example, having a pair of $0$
singular values is a phenomenon of (real) co-dimension 6!  
($A\in \C^{2\times 2}$, $A=A^T$, has two $0$ singular values only if $A=0$.)
\end{itemize}
\end{rems}

An implication of Theorem \ref{Cod2} is that, generically, 1-parameter functions will have no
double singular value, and will be full rank.
For this reason, we next show that a smooth full rank $A$ with distinct singular values
has a Takagi factorization with smooth factors.  To prove this result, we will need the
following elementary result stating that a smooth
complex valued function of real variable, of modulus $1$,
has a smooth square root of modulus $1$. 
\begin{lem}\label{SmoothRoot}
	Let $\eta \in \cont^k(\R,\C)$, $k\ge 1$ (or even $\eta \in \cont^\omega$, i.e., analytic), 
	be of modulus $1$ for all $t\in \R$.  Then, there exists a square root
	function $s\in \cont^k(\R,\C)$ of modulus $1$ such that $s^2=\eta$; $s$ is unique up to sign. 
\end{lem}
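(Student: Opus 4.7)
The plan is to construct a smooth real phase function $\theta$ with $\eta = e^{i\theta}$, and then take $s = e^{i\theta/2}$. The construction of $\theta$ is the standard lift of a smooth map $\R \to S^1$ through the covering $\R \to S^1$, $t \mapsto e^{it}$. Concretely, from $|\eta|^2 = \eta \bar\eta = 1$ we get $\dot\eta \bar\eta + \eta \dot{\bar\eta} = 0$, so $\dot\eta \bar\eta$ is purely imaginary and hence $\psi(t) := -i\,\dot\eta(t)\bar\eta(t)$ is a $C^{k-1}$ (or $C^\omega$) real-valued function. I would define
\[
\theta(t) = \theta_0 + \int_0^t \psi(\tau)\,d\tau,
\]
with $\theta_0 \in \R$ chosen so that $e^{i\theta_0} = \eta(0)$.

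Next I would verify that $e^{i\theta(t)} \equiv \eta(t)$. Consider $f(t) := \eta(t)e^{-i\theta(t)}$. A direct computation gives
\[
\dot f = \bigl(\dot\eta - i\dot\theta\,\eta\bigr)e^{-i\theta} = \bigl(\dot\eta - \dot\eta\bar\eta\,\eta\bigr)e^{-i\theta} = 0,
\]
using $\bar\eta\eta = 1$. Since $f(0) = \eta(0)e^{-i\theta_0} = 1$, we conclude $f \equiv 1$, i.e., $\eta = e^{i\theta}$. Because $\psi$ is $C^{k-1}$ its primitive $\theta$ is $C^k$ (and analytic in the analytic case); then $s(t) := e^{i\theta(t)/2}$ is $C^k$, has modulus $1$, and satisfies $s^2 = e^{i\theta} = \eta$, as required.

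For uniqueness, suppose $s_1, s_2 \in C^k(\R,\C)$ both have modulus $1$ and square to $\eta$. Then $g := s_1/s_2$ is continuous with $g^2 \equiv 1$, so $g(t) \in \{+1,-1\}$ for every $t$. Since $\R$ is connected and $\{\pm 1\}$ is discrete, $g$ is constant, giving $s_1 \equiv \pm s_2$.

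I do not expect any serious obstacle: the only subtle point is confirming that the candidate integrand $-i\dot\eta\bar\eta$ is genuinely real-valued (so that $e^{i\theta}$ actually has modulus $1$), which follows from differentiating $|\eta|^2 \equiv 1$. The rest is routine verification and a connectedness argument for uniqueness.
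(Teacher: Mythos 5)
Your proof is correct and follows essentially the same route as the paper's: construct a smooth real phase $\theta$ with $\eta=e^{i\theta}$ and set $s=e^{i\theta/2}$. The paper simply asserts the existence of the smooth logarithm, whereas you supply the standard explicit construction (integrating $-i\dot\eta\bar\eta$) and the connectedness argument for uniqueness, both of which check out.
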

\begin{proof}
	This is a consequence of the fact that there is a unique smooth logarithm of
	$\eta(t)$, that is a unique smooth real valued function $\psi(t)$ so that $\eta(t)=e^{i\psi(t)}$,
	from which we will have $s(t)=e^{i\psi(t)/2}$.
\end{proof}

\begin{thm}\label{SmoothTakagi}
Let $A\in \cont^k(\R,\Cnxn)$ with $k\ge 0$, $A^T=A$, and 
suppose that for any given $t$ the singular values are distinct and positive.  
Then, $A$ admits a Takagi factorization, $A(t)=U(t)S(t)U^T(t)$, $\forall t$,
with $U$ unitary and $S=\diag(\sigma_1,\dots, \sigma_n), \,\ \sigma_1> \dots > \sigma_n>0$,
where $U,S$ are as smooth as $A$.
\end{thm}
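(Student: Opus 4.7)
The plan is to follow the strategy of Algorithm \ref{TakAlgo}, but carried out smoothly in $t$. The key preliminary step is to build a smooth SVD, and then a smooth phase correction using Lemma \ref{SmoothRoot}.

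First I would produce smooth left singular vectors. Since $A=A^T$ gives $A^*=\bar A$, the matrix $B(t):=A(t)\bar A(t)=A(t)A(t)^*$ is Hermitian positive definite, and its eigenvalues are exactly the $\sigma_k(t)^2$, which are distinct and positive by hypothesis and depend on $t$ as smoothly as $A$. By the classical Rellich-type theorem for Hermitian matrices with simple eigenvalues (equivalently, a direct implicit-function-theorem argument on the eigenvalue-eigenvector equations), one obtains a $\cont^k$ unitary $Q(t)$ and a $\cont^k$ diagonal $\Sigma(t)=\diag(\sigma_1(t),\dots,\sigma_n(t))$ with $Q^*BQ=\Sigma^2$; this is a global statement on $\R$ since the parameter space is simply connected, so no monodromy obstructs patching local smooth choices. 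Set $S(t):=\Sigma(t)$, which is therefore as smooth as $A$.

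Next I would follow Step 1--2 of Algorithm \ref{TakAlgo} to build the phase. Define the smooth diagonal-valued quantity $M(t):=Q(t)^*A(t)\overline{Q(t)}$. Writing an SVD $A=Q\Sigma V^*$ (where $V$ is obtained pointwise but need not be smoothly tracked), the symmetry of $A$ gives $(\bar V,\Sigma,\bar Q)$ as a second SVD, and uniqueness in the distinct-singular-value case yields $\bar V=Q\Lambda$ for some diagonal phase matrix $\Lambda(t)$. Substituting, and using the identity $Q^T\bar Q=I$ (which follows from $Q^*Q=I$ by conjugation), one computes $M=\Sigma V^*\bar Q=\Sigma\Lambda Q^T\bar Q=\Sigma\Lambda$, so $M$ is diagonal with $|M_{kk}|=\sigma_k$. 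Hence the diagonal matrix $\eta(t):=\Sigma(t)^{-1}M(t)$ has entries of modulus $1$ and is as smooth as $A$.

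Applying Lemma \ref{SmoothRoot} componentwise yields a smooth diagonal matrix $e^{i\Phi(t)}$ of modulus-$1$ entries with $e^{2i\Phi}=\eta$. Setting $U(t):=Q(t)e^{i\Phi(t)}$ gives a $\cont^k$ unitary, and the identity $U\Sigma U^T=Qe^{i\Phi}\Sigma e^{i\Phi}Q^T=Q\Sigma e^{2i\Phi}Q^T=Q(\Sigma\Lambda)Q^T=QMQ^T=Q(Q^*A\bar Q)Q^T=A$ (using $QQ^*=I$ and $\bar QQ^T=I$) establishes the claim. The main obstacle in this approach is the first step: the global smooth spectral decomposition of the Hermitian family $A\bar A$. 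Once one accepts that input (standard under simple eigenvalues), the remainder is straightforward because the symmetry forces $Q^*A\bar Q$ to be diagonal, and Lemma \ref{SmoothRoot} handles the phase ambiguity that would otherwise prevent the pointwise Takagi factors from fitting together smoothly.
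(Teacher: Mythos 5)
Your proof is correct and follows essentially the same route as the paper's: obtain a smooth left singular factor $Q$ with smooth $\Sigma$, note that the symmetry of $A$ forces $Q^*A\bar Q$ to be diagonal with entries of modulus $\sigma_k$, and repair the phases with Lemma \ref{SmoothRoot}. The only difference is that the paper gets the smooth factor by citing the smooth-SVD result of Dieci--Eirola, whereas you derive it self-containedly from the Hermitian eigenproblem for $A\bar A=AA^*$ with simple eigenvalues; both are valid.
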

\begin{proof}
	The result is a consequence of the construction given in Algorithm \ref{TakAlgo},
	under the present assumptions.  In fact, see \cite{DieciEirola}, functions with distinct
	singular values have an SVD with factors as smooth as $A$; that is, in the
	decomposition $A(t)=U(t)\Sigma(t)V^*(t)$, $U,\Sigma, V$ are as smooth as $A$. In particular,
	the diagonal function $D=\diag\left(\frac{(U^*A\bar U)_{kk}}{\sigma_k} \,,\ k=1,\dots, n\right)$, is as
	smooth as $A$ and unitary.  Then, using Lemma \ref{SmoothRoot},
	we take a smooth square root of the entries on the diagonal
	of $D$ and obtain the stated smoothness result.
\end{proof}

\begin{rem}
	Under the assumption of Theorem \ref{SmoothTakagi},
	we observe that there is no freedom left in specifying the factors of the
	Takagi factorization, aside from a trivial change in sign of the columns of $U$.
	If we fix a reference Takagi factorization at some value, say at $t=0$,
	then there is only one $\cont^k$ factorization passing through these given conditions.
\end{rem}	

The most important consequence of Theorem \ref{Cod2} is that we need to consider functions
of 2 real parameters
to expect having a pair of equal singular values or a zero singular value.  This is the same situation as what we 
have for coalescing singular values in the real (not complex) case;
e.g., see \cite{DiPu1}.   Furthermore, these occurrences are
expected to take place at isolated parameter values, and to persist under generic perturbations.
For these reasons, below we will focus on matrices depending (smoothly) on two
parameters and we will address the concerns of locating parameter values
where there are double singular values or where a singular value vanishes.
We will call \emph{conical intersections} the locations where two singular values
become equal.

We next see that --similarly to the real symmetric eigenproblem-- the columns of $U$
associated to a pair coalescing eigenvalues will undergo a change in sign (they flip over)
when we cover a loop (i,e., a closed curve) in parameter space, enclosing the
associated conical intersection.  Similarly, we will see that the column of $U$ associated to
a $0$ singular value inside the loop will undergo a change in sign as we cover the loop.
As previously remarked, we should not be concerned
with the possibility of encountering a conical intersection, or a $0$ singular value,
as we cover the 1-d loop, since coalescings and losses of rank are co-dimension 2
phenomena, not 1.
 
To obtain the result on coalescing singular values, and losses of rank, we adopt the following rewriting
(e.g., used in \cite[Theorem 2.7]{ChernDieci}). We write $A\in \Cnxn$, $A=A^T$, as $A=B+iC$, $B,C\in \Rnxn$
symmetric.  Then, we associate to $A$ the following (special) symmetric matrix $M\in \R^{2n\times 2n}$:  
\begin{equation}\label{Mmat}
M\ = \ \bmat{B & C \\ C & -B}\ ,\quad B=B^T\ ,\quad C=C^T\ .
\end{equation}
Now we make the following observations.
\begin{itemize}
\item[(i)]
The eigenvalues of $M$ are given by $\diag(S)$ and $\diag(-S)$ (i.e., the singular values of $A$ and
their opposite values).  We will label the eigenvalues of $M$ as
$\lambda_1\ge \dots \lambda_n \ge \lambda_{n+1} \ge \dots \ge  \lambda_{2n}$, where
$\lambda_j=\sigma_j$, and $\lambda_{n+j}=-\sigma_{n-j+1}$, $j=1,\dots, n$.
\item[(ii)]
If $U$ is a unitary Takagi factor of $A$, writing $U=V+iZ$, then $W=\bmat{V & Z \\ Z & -V}$
gives an orthogonal eigendecomposition of $M$: 
$W^TMW=\bmat{S & 0 \\ 0 & -S}$.  Viceversa, if $M\bmat{X\\ Y}=\bmat{X\\Y} S$, with 
$\bmat{X\\ Y}$ orthonormal, then also $M\bmat{Y\\ -X}=\bmat{Y\\-X}(-S) $, 
$W=\bmat{X & Y \\ Y & -X}$ is an orthogonal eigendecomposition of $M$,
and $U=X+iY$ is unitary and gives a Takagi factorization of $A$. 
\end{itemize}

In light of the correspondence just outlined between the Takagi factors and the
symmetric eigenproblem, see \eqref{Mmat}, we can now exploit known results
about conical intersections of eigenvalues of symmetric functions of two parameters
to obtain the corresponding results for the coalescing singular values of a Takagi
factorization.    We give the main result in the next Theorem, whose proof amounts to
putting together known results for the symmetric eigenproblem, relative to {\sl{generic}}
coalescing points; we refer to \cite{DiPu1} for the meaning of generic in this context\footnote{in essence,
it means that the surfaces of eigenvalues of $M$ come together as a double cone at points
where the eigenvalues are equal}, and we note that to say that the smallest eigenvalue vanishes
at a generic point means that the ordered $n$-th and $(n+1)$-st eigenvalues of $M$ have a
generic coalescing there.

\begin{thm}\label{MainThm}
Let $A\in\cont^k(\Omega,\Cnxn)$ with $k\ge 0$, $A^T=A$, and $\Omega$ a closed and bounded subset of $\R^2$. Let $\sigma_1\ge \dots \ge \sigma_n\ge 0$ be the
continuous singular values of $A$ in $\Omega$.
Let $\Gamma$ be a simple closed curve in $\Omega$, parametrized as a smooth
function $\gamma(t)$, $t\in [0,1]$, and assume that the singular values of $A$ are
nonzero and distinct along $\Gamma$.   Let $A(t)=A(\gamma(t))$, $t\in [0,1]$, be the restriction of
$A$ to $\Gamma$.
Let $A(0)=U_0S_0U^T_0$ be a given Takagi
factorization of $A(0)$, and let $U(t)$, $S(t)$, be the unique smooth Takagi factors
of $A(t)$, for all $t \in [0,1]$, satisfying $U(0)=U_0$, $S(0)=S_0$.  Partition 
$U(t)=\bmat{u_1(t) & \dots & u_n(t)}$.

Finally, assume that there is a unique, generic, coalescing point $x\in \Omega$, where
$\sigma_j=\sigma_{j+1}$, for some $j=1,\dots, n-1$, and that there is no other point in $\Omega$ where a pair of singular values coalesce or a singular value is zero. Then, 
\begin{equation}\label{CI1}
u_k(1)=u_k(0)\ ,\quad \text{for} \quad k\ne  j, j+1\ , \quad \text{and} \quad  
\bmat{u_j(1) & u_{j+1}(1)}=-\bmat{u_j(0) & u_{j+1}(0)}\ .
\end{equation}
Similarly, if $\sigma_n$ vanishes at a unique, generic, point inside $\Omega$, and it is
$\sigma_n\ne \sigma_{n-1}$ in $\Omega$, then  $u_n(1)=-u_n(0)$.
\end{thm}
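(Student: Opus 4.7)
The plan is to reduce the statement to the corresponding sign-flip result for a real symmetric matrix family of two parameters, via the $2n\times 2n$ companion $M(t)=M(\gamma(t))$ built from $A(t)$ as in \eqref{Mmat}, and then translate back; the classical fact I want to import, see \cite{DiPu1}, is that the smooth continuation of an eigenvector of a real symmetric two-parameter family around a loop enclosing a single generic conical intersection returns with its sign flipped. First, by Theorem \ref{SmoothTakagi}, the hypothesis that $A$ has distinct positive singular values along $\Gamma$ guarantees that, given $U_0,S_0$, there is a unique smooth Takagi factorization $U(t)=V(t)+iZ(t)$, $S(t)$ on $[0,1]$. Arranging, in decreasing order of eigenvalue, the real vectors $\smat{v_k \\ z_k}$ (eigenvector of $M$ for $\sigma_k$) and $\smat{z_k \\ -v_k}$ (eigenvector of $M$ for $-\sigma_k$) produces a smooth real orthogonal diagonalization $W(t)$ of $M(t)$ on $[0,1]$; because on $\Gamma$ all $\sigma_k$'s are distinct and nonzero, all $2n$ eigenvalues of $M$ are distinct along $\Gamma$, so this eigendecomposition is unique up to the sign of each column, and those signs are fixed by $U(0)=U_0$.

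For the coalescing case, the interior point $x$ where $\sigma_j(x)=\sigma_{j+1}(x)$ is exactly a point where the ordered eigenvalues $\lambda_j$ and $\lambda_{j+1}$ of $M$ coalesce, and by the footnote in the statement this is a generic conical intersection of $M$ in the sense of \cite{DiPu1}. Applying the classical sign-flip result to $M$ on the loop, the $j$-th and $(j+1)$-st columns of $W$ flip sign and all other columns return unchanged. Reading off the top halves of these columns yields $v_j(1)=-v_j(0)$, $v_{j+1}(1)=-v_{j+1}(0)$, $v_k(1)=v_k(0)$ for $k\ne j,j+1$, and the lower halves do the same for the $z_k$'s; setting $u_k=v_k+iz_k$ then gives \eqref{CI1}. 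The mirror coalescing at $\lambda_{2n-j}=\lambda_{2n-j+1}=-\sigma_j$ produces a simultaneous sign flip of the two corresponding columns of $W$, which is automatically consistent because of the block form of the decomposition.

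For the zero singular value case, $\sigma_n(x)=0$ is by hypothesis a generic coalescing of the middle two eigenvalues $\lambda_n$ and $\lambda_{n+1}$ of $M$ at the value $0$, so the same classical result gives $w_n(1)=-w_n(0)$; the upper and lower halves of $w_n$ are $v_n$ and $z_n$, hence $v_n(1)=-v_n(0)$ and $z_n(1)=-z_n(0)$, yielding $u_n(1)=-u_n(0)$ as claimed, and the forced flip of $w_{n+1}=\smat{z_n \\ -v_n}$ is automatic and consistent.

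The main point of possible concern is that the classical theorem in \cite{DiPu1} is stated for a general real symmetric two-parameter family, whereas our $M$ is constrained to the form \eqref{Mmat}; however the genericity condition in the hypothesis of Theorem \ref{MainThm} has been formulated directly as a generic conical intersection of the eigenvalues of $M$, so there is nothing extra to check about the shape of the eigenvalue surface, and everything else reduces to the straightforward block bookkeeping needed to convert a sign flip of a real orthogonal column of $W$ into a sign flip of a complex unitary column of $U$.
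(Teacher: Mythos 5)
Your proposal is correct and follows essentially the same route as the paper: pass to the doubled real symmetric family $M$ of \eqref{Mmat}, invoke the sign-flip results of \cite{DiPu1} for generic conical intersections, and translate the flips of the columns of $W$ back into flips of the columns of $U=V+iZ$. The only difference is cosmetic: where you argue that the simultaneous mirror coalescing $\lambda_{2n-j}=\lambda_{2n-j+1}$ is ``automatically consistent'' by the block structure, the paper simply cites the version of the \cite{DiPu1} theorem that covers two pairs of coalescing eigenvalues at once (Theorem 3.3 there), and Theorem 2.8 there for the middle pair in the loss-of-rank case.
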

\begin{proof}
Under the given hypotheses, the function $M$ in $\Omega$ has the continuous and
ordered eigenvalues $\sigma_1\ge \dots \ge \sigma_n \ge -\sigma_n \ge \dots \ge -\sigma_1$,
and along $\Gamma$ these are distinct and $\sigma_1> \dots > \sigma_n>0$.
	
Now, in the case of two equal singular values at a generic point $x\in \Omega$, we will have both
$\lambda_j(x)=\lambda_{j+1}(x)$ and $\lambda_{2n-j+1}(x)=\lambda_{2n-j}(x)$ (since the latter
are the negative of the singular values).  Then, appealing to  Theorem 3.3 in \cite{DiPu1}
for two pairs of coalescing eigenvalues of symmetric
functions, given the form \eqref{Mmat}, and noticing that a coalescing must occur within the
first $n$ ordered eigenvalues of $M$, we obtain the desired result: 
$\bmat{u_j(1) & u_{j+1}(1)}=-\bmat{u_j(0) & u_{j+1}(0)}$.

As far as the vanishing singular value, we again reason with the enlarged system
\eqref{Mmat}, and now notice that a $0$ singular value reflects in the coalescing pair $\lambda_n$
and $\lambda_{n+1}$ as eigenvalues of $M$.  In this case, we can appeal to Theorem 2.8 of \cite{DiPu1}
to conclude that $u_n(1)=-u_n(0)$.
\end{proof}

%
\begin{rem}\label{rem:multiple_degeneracy}
Again by appealing to the enlarged system \eqref{Mmat}, the case of multiple (generic) points of degeneracy of the singular values in $\Omega$ can be analyzed similarly to the way that multiple points of coalescence for eigenvalues were dealt with in \cite{DiPu1}, to which we refer for details. In essence, one can show that each generic degeneracy causes the eigenvector(s) involved in the degeneracy to change sign (see Eq. \eqref{CI1}). For instance, suppose that $\sigma_1=\sigma_2$ and $\sigma_2=\sigma_3$ at two distinct generic coalescing points in $\Omega$, and that there are no other points of degeneracy for the singular values in $\Omega$. Then, the eigenvectors $u_1(t), u_2(t), u_3(t)$ smoothly continued around a 1-periodic loop that contains the two coalescing points will satisfy:
\begin{equation*}
 \bmat{u_1(1) & u_2(1) & u_3(1)}=\bmat{-u_1(0) & u_2(0) & -u_3(0)},
\end{equation*}
because both pairs $u_1, u_2$ and $u_2,u_3$ will change sign. Similarly, if $\sigma_{n-1}=\sigma_{n}$ and $\sigma_n=0$ at two distinct points and $\sigma_1> \dots > \sigma_n>0$ elsewhere, then:
\begin{equation*}
 \bmat{u_{n-1}(1) & u_n(1)}=\bmat{-u_{n-1}(0) & u_n(0)}.
\end{equation*}
\end{rem}

\section{One parameter case.}\label{1paraDEs}

As we saw, under the conditions of Theorem \ref{SmoothTakagi}, a 1-parameter function $A$ admits a smooth Takagi factorization.
Next, we are going to derive differential equations describing the evolution of the factors $U$ and $S$,
and then we will give an algorithm to compute a smooth Takagi factorization
for 1-parameter functions, under the assumption of distinct and nonzero singular values.

\subsection{Differential equations for the factors.}
We have the following result.

\begin{thm}\label{TakagiDE}
Let $A\in \cont^k([0,\infty),\Cnxn)$ with $k\ge 0$, and let $A^T=A$.
Suppose that for any given $t\ge 0$ the singular values of $A(t)$ are distinct and positive.  Let $A(0)=U_0S_0U_0^T$
be a Takagi factorization at $t=0$.
Then, the smooth factors $U$ and $S$ for which $A(t)=U(t)S(t)U^T(t)$, for 
all $t\ge 0$, with $U$ unitary, 
$S=\diag(\sigma_1,\dots, \sigma_n), \,\ \sigma_1> \dots > \sigma_n>0$, $U(0)=U_0$ and $S(0)=S_0$,
satisfy the differential equations 
\begin{equation}\label{edo_pred}\begin{split}
\dot S& = \Real(\diag(U^*\dot A\bar U))\\
\dot U& = UH\ ,
\end{split}\end{equation}
where 
$H=U^*\dot  U$ is the
skew-Hermitian  function such that 
\begin{equation}\label{H_edo}
\begin{split}
H_{kk}&=i \frac{\Img(U^*\dot A\bar U)_{kk}}{2 \sigma_k}\ ,\,\,\quad k=1,\dots, n\ ,\\
H_{kj}&=\frac{\Real(U^*\dot A\bar U)_{kj}}{\sigma_j-\sigma_k}\,+ \, 
i \frac{\Img(U^*\dot A\bar U)_{kj}}{\sigma_j+\sigma_k}\ ,\quad
\quad k=1,\dots, n\ ,\,\ j=1,\dots, k-1\ .
\end{split}
\end{equation} 
\end{thm}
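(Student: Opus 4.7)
The plan is to differentiate $A = U S U^T$ directly and then extract the entries of $H := U^* \dot U$ by splitting into real and imaginary parts. Since $U^* U = I$, differentiating gives $\dot U^* U + U^* \dot U = 0$, so $H$ is skew-Hermitian, $H^* = -H$, which also yields $\bar H = -H^T$. Taking complex conjugates of $U^* U = I$ gives the companion identity $U^T \bar U = I$, whose derivative reads $\dot U^T \bar U = -U^T\dot{\bar U} = -\overline{U^*\dot U} = -\bar H$. With these relations in hand, I would compute
\begin{equation*}
U^* \dot A \bar U \;=\; U^*\dot U \, S\, U^T \bar U \;+\; \dot S \;+\; S\, \dot U^T \bar U \;=\; HS + \dot S - S\bar H,
\end{equation*}
and then use $\bar H = -H^T$ to rewrite this cleanly as $U^* \dot A \bar U - \dot S = HS + S H^T$.

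Next I would read off entries. Using $H_{jk} = -\overline{H_{kj}}$, one has $(HS + SH^T)_{kj} = \sigma_j H_{kj} + \sigma_k H_{jk} = \sigma_j H_{kj} - \sigma_k \overline{H_{kj}}$. On the diagonal $k = j$, this equals $2i\sigma_k\,\Img(H_{kk})$, which is purely imaginary; matching real and imaginary parts against $(U^*\dot A \bar U)_{kk} - \dot\sigma_k$ (which is real in its diagonal contribution from $\dot S$) produces simultaneously $\dot\sigma_k = \Real(U^*\dot A\bar U)_{kk}$ (hence the first equation of \eqref{edo_pred}) and, upon dividing by $2\sigma_k$, the first formula of \eqref{H_edo} for $H_{kk}$. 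For $j < k$, writing $H_{kj} = x + iy$ with $x, y \in \R$ turns the off-diagonal expression into $(\sigma_j - \sigma_k)x + i(\sigma_j + \sigma_k)y$; matching real and imaginary parts against $(U^*\dot A \bar U)_{kj}$ yields the two quotients in the second formula of \eqref{H_edo}. The above-diagonal entries of $H$ are then determined uniquely by skew-Hermiticity, and $\dot U = U H$ is simply the definition of $H$ multiplied by $U$ on the left (using $UU^* = I$).

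The main technical care is simply keeping complex conjugation and transposition straight throughout; once one observes that $U^*U = I$ implies both $U^T \bar U = I$ and $\bar H = -H^T$, the remainder is algebra. The hypothesis that the singular values are distinct and strictly positive is precisely what makes \eqref{H_edo} well defined, since all denominators $\sigma_j \pm \sigma_k$ (for $j < k$) and $2\sigma_k$ are then bounded away from zero. Finally, the smoothness of the right-hand sides of \eqref{edo_pred} and \eqref{H_edo} follows from the smoothness of $U$ and $S$ already established in Theorem \ref{SmoothTakagi}, so the differential equations are well posed for the prescribed initial data $U(0) = U_0$, $S(0) = S_0$.
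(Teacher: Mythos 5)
Your proposal is correct and follows essentially the same route as the paper: differentiate $A=USU^T$, conjugate by $U^*$ and $\bar U$, use $U^TU^*=I$ and the skew-Hermiticity of $H=U^*\dot U$ to reduce to $U^*\dot A\bar U-\dot S=HS-S\bar H$, and then match real and imaginary parts entrywise (the diagonal giving $\dot\sigma_k$ and $H_{kk}$, the off-diagonal giving $H_{kj}$). The only cosmetic difference is your rewriting of $-S\bar H$ as $SH^T$; the entry computations and the role of the distinctness/positivity hypotheses are identical to the paper's argument.
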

\begin{proof}	
We differentiate the relation $A(t)=U(t)S(t)U^T(t)$ to obtain
\begin{equation*}\begin{split}
\dot A&=\dot U SU^T+U\dot SU^T +US\dot U^T \\
U^*\dot A \bar U& =
U^*\dot US U^T\bar U+U^*U\dot S U^T\bar U+U^*US\dot U^T\bar U.
\end{split} \end{equation*}
Now, using $U^*U=I$ (hence $U^T\bar U=I$), and letting $H=U^*\dot U$ which is
skew-Hermitian and hence $\dot U^T\bar U=-\bar H$, we have
\begin{equation}\label{DE1}
\dot S=U^*	\dot A \bar U-HS+S\bar H\ ,\quad \dot U=UH\ .
\end{equation}
Now we use the fact that $S$ is diagonal, $S=\diag(\sigma_k, \, k=1,\dots, n)$,
to obtain for the singular values
\begin{equation*}\begin{split}
\dot \sigma_k& = (U^*\dot A\bar U)_{kk}-H_{kk}\sigma_k+\sigma_k\bar H_{kk}
\quad\text{or}\quad \\
\dot \sigma_k& = (U^*\dot A\bar U)_{kk}-2H_{kk}\sigma_k\ .
\end{split}\end{equation*}
Now, we must have $H_{kk}=i\phi_k$, where $\phi_k\in \R$, and $\sigma_k\in \R$.
So, we let $(U^*\dot A\bar U)_{kk}=\alpha_{kk}+i\beta_{kk}$ and thus obtain
$\dot \sigma_k =\alpha_{kk}+i(\beta_{kk}-2\phi_k\sigma_k)$ and so
\begin{equation}\label{Sde}
\dot \sigma_k = \alpha_{kk}\ ,\quad \phi_k=\frac{\beta_{kk}}{2\sigma_k}\ ,
\quad k=1,\dots, n\ .
\end{equation}
At the same time we must have $(\dot S)_{kj}=0$, $k\ne j$, and we will use this to
obtain the remaining relations for the entries of $H$.  Namely, we have
$$(U^*\dot A\bar U)_{kj}-H_{kj}\sigma_j+\sigma_k \bar H_{kj}=0$$
and letting $H_{kj}=\alpha_{kj}+i\beta_{kj}$ we have
$$(U^*\dot A\bar U)_{kj}=\alpha_{kj}(\sigma_j-\sigma_k)+
i\beta_{kj} (\sigma_j+\sigma_k)$$
and therefore
\begin{equation}\label{Ude}
\alpha_{kj}=\frac{\Real(U^*\dot A\bar U)_{kj}}{\sigma_j-\sigma_k}\ ,\,\,
\beta_{kj}=\frac{\Img(U^*\dot A\bar U)_{kj}}{\sigma_j+\sigma_k}\ ,\quad
\quad k=1,\dots, n\ ,\,\ j=1,\dots, k-1\ .
\end{equation}
Using \eqref{Sde} and \eqref{Ude} in \eqref{DE1}, we obtain the differential
equations governing the evolution of the factors $S$ and $U$ in the
Takagi factorization of $A$.
\end{proof}

\begin{rem}
The assumption of nonzero singular values has been used in \eqref{Sde} to obtain
uniquely the phases $\phi_k$'s.
The assumption of distinct singular values has been used
in \eqref{Ude} to obtain the values $H_{kj}$.
\end{rem}

\subsection{A predictor corrector 
	algorithm to compute the factorization along loops in parameter space.}\label{PredCorrAlgo}

Following the lines of our previous work on eigenvalue problems, see \cite{DPP2}, we developed
a predictor-corrector procedure to compute the smooth Takagi factorization of one
parameter symmetric functions.  

Given the factorization  $A(t)= U(t) S(t) U^T(t)$ at some $t$,
 we briefly describe how we compute
the smooth factors $U$ and $S$ at a new point $t_{new}=t+h$. 
The diagonal factor $S(t_{new})$ 
and a unitary matrix $U_{new}$ such that $A(t_{new}) = U_{new}\, S(t_{new})\, U^T_{new}$ 
are obtained by Algorithm \ref{TakAlgo}, using  standard software like the {\tt svd Matlab} command. 
Recalling    that $U_{new}$ is unique up to the signs of its columns, correct signs for
  $U(t_{new})$ can be
predicted using the differential equations.
Indeed, taking an Euler step  in 
\eqref{edo_pred}-\eqref{H_edo}, we first have
$$ U(t+h)\simeq U(t) (I+h H(t)) .$$
 Since  $H(t)$  depends on the matrix
$U^*(t)\dot A(t)\bar U(t)$, we replace
  the  derivative ${\dot A}(t)$  with $(A(t+h)-A(t))/h$,   
$$U^*(t)\dot A(t)\bar U(t)\simeq 
\frac{U^*(t) A(t+h)\bar U(t)-S(t)}{h}.$$
Setting  $A_U= U^*(t) A(t+h) \bar U(t)$   we finally have
\begin{equation}\label{predittore_U}
	\begin{split}
		U^{pred}& = U(t)(I+\mathcal H), 
		\quad  {\rm with}\; \mathcal H\;\; \text{skew-Hermitian~and~such~that}\\
		\mathcal H_{kk}&=i \frac{\Img(A_U)_{kk}}{2 \sigma_k}\ ,\,\,\quad k=1,\dots, n\ ,\\
		\mathcal H_{kj}&=\frac{\Real(A_ U)_{kj}}{\sigma_j-\sigma_k}\,+ \, 
		i \frac{\Img(A_U)_{kj}}{\sigma_j+\sigma_k}\ ,\quad
		\quad k=1,\dots, n\ ,\,\ j=1,\dots, k-1\ .
	\end{split}
\end{equation}
Then, 
we compute  the
sign matrix
$Z={\rm sign}(\Real(\diag(U_{new}^* \, U^{pred}))$ 
to correct the signs of the singular vectors, and set  $U(t_{new})=U_{new}\, Z$. 
Clearly, the correction is reliable as long as the stepsize $h$ is sufficiently small with respect 
to factors variation, which we monitor by computing the   
following parameters:
\begin{equation}\label{rholv} 
	\rho_U   =\frac{ \|U(t_{new})-U^{pred}\|_F}{\sqrt{n}}
	\quad {\rm and}  \quad
	\rho_{\sigma}   = \max_j   \frac{|\sigma_{j}(t_{new})-\sigma_{j}^{pred}|} {|\sigma_{j}(t_{new})|+1} \;;
\end{equation}
predicted singular values $\sigma_j^{pred}$, $j=1,\ldots,n$,   
are obtained proceeding  as before
for $U^{pred}$:
\begin{equation}\label{predittore_S}
	S(t+h)\simeq S(t)+h\,  \Real(\diag(U^*(t)\dot A(t)\bar U(t)))
	\simeq \Real(\diag(A_U)) \equiv S^{pred} .
\end{equation}
We show in Algorithm \ref{sfs}  how $\rho_U$ and $\rho_{\sigma}$
are used    to update the stepsize $h$ (see Step 4),  and to   accept   a step  or  declare {\it failure}  (see Step  5). 

Finally,  before attempting a new step after  an accepted one, 
we use the following  simple secant approximation of the singular values at $t_{new}+h$: 
\begin{equation*} \label{secant}
	\sigma_j^{sec}=\sigma_j(t_{new})+h\, \dot{\sigma}_j^{sec}= \sigma_j(t_{new})+
	h\; \frac{ \sigma_j(t_{new})-\sigma_j(t)} { t_{new}- t} \simeq   \sigma_j(t_{new}+h) , \;\;\; j=1,\ldots,n,
\end{equation*}
in order to reduce the occurrence of failures. Recall that we expect to
have $\sigma_1>\sigma_{2}>\ldots > \sigma_n>0$: then   
the new step $t_{new}+h$ is likely to fail if either $\sigma^{sec}_n<0$ or $\sigma^{sec}_j<\sigma^{sec}_{j+1}$ for some $j$.
In these cases $h$ is   reduced 
- at least halved - as 
shown at Steps 7 and 8 of Algorithm \ref{sfs}. 

\algo{sfs}{Predictor-Corrector step } 
{Input:  $t$,  the Takagy factorization  $A(t)=U(t)S(t) U^T(t)$, a  stepsize $h$.\\[1ex]
	Output:  $t_{new}$,  the smooth factors  $U(t_{new})$ and $S(t_{new})$, 
	an updated stepsize $h$.\\[1ex]
	1. Set $t_{new}=t+h$,  and compute ~~$U^{pred}$ ~and~ 
	$S^{pred}$ by \eqref{predittore_U} and \eqref{predittore_S};\\
	2. Compute a Takagi factorization
	$A(t_{new})=U_{new}\,S(t_{new}) \, U^T_{new}$ by Algorithm \ref{TakAlgo}; \\
	3.  Set  $U(t_{new}) = U_{new}\, Z$, ~ with~ $Z={\rm sign}(\Real(\diag(U_{new}^* \, U^{pred}))$;\\
	4.   Compute   $\rho_{\sigma}$ and $\rho_U$ from \eqref{rholv}, 
	set $\rho=  \max \{\rho_{\sigma}, \rho_U \}/{\tt tolstep}$ (in our experiments, we
	have used $\mathtt{tolstep}=  10^{-2}$), 
	and update $h=h/\rho;$ 
	\vskip 2pt
	5. If $\rho\le 1.5$, accept the step; \\
	\hspace*{10pt} otherwise declare {\it failure}, go to step 1,  and retry  with the new (smaller) $h$;\\
	6.   Compute   $\displaystyle  \dot{\sigma}_j^{sec} =   \frac{ \sigma_j(t_{new})-\sigma_j(t)} { t_{new}- t}\;$
	~ and ~
	$\;\sigma_j^{sec} = \sigma_j(t_{new})+h   \dot{\sigma}_j^{sec},$ ~ for $j=1,\ldots,n;$    \\
	7. If   ~$\sigma^{sec}_j<\sigma^{sec}_{j+1}$ for some $j=1,\ldots,n-1$, set $h =   \min \left \{ \,  
	h/2,\, 0.9\; h_{sec}  
	\right \}$, where
	\begin{equation*}
		h_{sec}= \min \left \{ 
		\,  
		\frac{ \sigma_j(t_{new})-\sigma_{j+1}(t_{new})} { \dot{\sigma}^{sec}_{j+1}-\dot{\sigma}_j^{sec}}, ~
		{\rm for} ~\sigma^{sec}_j<\sigma_{j+1}^{sec}, ~j=1,\dots, n-1 
		\right \};
	\end{equation*}
         8. If ~$\sigma_n(t_{new})+h   \dot{\sigma}_n^{sec} <0$,~~ 
	set ~ ~$h  =  \min \left \{ \, \displaystyle
	h/2,    \; 0.9\; \sigma_n(t_{new})\,/\,|\dot{\sigma}^{sec}_{n}|
	\right \}$ .
}

Following a smooth path 
can typically force the algorithm to take small steps, 
possibly very small steps,   when it passes near a point  where either $\sigma_n$ vanishes  or
two singular values coalesce, depending on the distance from such degeneracies. 
This behaviour
is easily justified looking  again at the differential equations \eqref{edo_pred}-\eqref{H_edo}.
Indeed, since $H_{nn}$ and the real part of the matrix $H$ are  inversely
proportional, respectively, to $\sigma_n$ and to the difference between two singular values,
when one of these quantities is small  some rapid variation in the unitary factor $U$ has to be expected.
A lower bound $h_{min}$ for the stepsize is then recommended, 
below which the continuation procedure has to be halted. In our experiments we set $h_{min}=100\,\mathtt{eps}$.

\section{Numerical tests.}\label{Numerics}
We should appreciate that the sole 
knowledge that the co-dimension of having degenerate singular values is 2 does not give any insight
into how many coalescing points we should expect to have in a spatial region of parameter
space.  Yet, the latter is exactly the kind of knowledge that one wants: how many isolated
coalescings should we expect?  For this reason, 
the goal of this section is to provide some insight into the so-called {\emph{density of degeneracies}}
for the Takagi factorization, relative to an appropriate ensemble of random matrices depending
on two parameters.

In our experiments, we built several realizations of 2-parameter matrix valued functions of increasing size, and performed a statistical study on the number of coalescings and losses of rank found in a certain domain. Our matrix valued functions are defined as trigonometric combinations of four matrices $A_1, \ldots, A_4$:
\begin{equation}\label{def:RandMatFun}
A(x,y)=A_1\cos(x)+A_2\sin(x)+A_3\cos(y)+A_4\sin(y),\ (x,y)\in \R^2,
\end{equation}
where each matrix $A_k$ is generated as follows:
\begin{enumerate}
\item  take a random matrix $B\in\C^{n\times n}$ with entries $B_{ij}=x_{ij}+\mathrm{i}\,y_{ij}$, where $x_{ij}$ and $y_{ij}$ are i.i.d. selected from $N_\R(0,\frac{1}{4})$,
\item form $A_k=B+B^T$.
\end{enumerate}

Matrix functions like \eqref{def:RandMatFun} had been considered in \cite{WA} in a study 
of degeneracies for real symmetric random matrices.  
Note that a function $A(x,y)$ like in \eqref{def:RandMatFun} is
defined (and analytic) everywhere in $\R^2$. Moreover, being each matrix $A_k$ 
symmetric with off-diagonal entries in $N_\C(0,1)$ and diagonal entries in $N_\C(0,2)$,
the same is true for $A(x,y)$ for all values of $x$ and $y$, except that the variances 
are two times those of the corresponding entries of the matrices $A_k$'s.

Hereafter, we will denote by $\mathcal{G}$ the ensemble of random symmetric matrices 
of interest, defined as follows:
\begin{equation*}
\mathcal{G} = \left\{A\in \Cnxn :\ A=A^T=(A_{ij})_{i,j=1}^n \,\text{such that}\,\ 
\left\{
\begin{array}{ll}
a_{ij}\in N_\C(0,2) & \text{if }i = j\\
a_{ij}\in N_\C(0,1) & \text{otherwise}
\end{array}
\right.
\right\}.
\end{equation*}
\begin{thm}\label{thm:Invariance}
Let $U\in\C^{n\times n}$ be unitary. Then, if $A\in\mathcal{G}$, also $U^TAU\in\mathcal{G}$.
 \end{thm}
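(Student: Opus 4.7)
My approach is to view $\mathcal{G}$ as the push-forward of an i.i.d.\ complex Gaussian ensemble under symmetrization, and exploit the invariance of that ensemble under $B\mapsto U^T B U$. Concretely, if $B\in\Cnxn$ has entries $B_{ij}$ whose real and imaginary parts are i.i.d.\ $N_\R(0,\tfrac14)$ (equivalently $B_{ij}\in N_\C(0,\tfrac12)$ and independent), then setting $A=B+B^T$ gives $A_{ii}=2B_{ii}\in N_\C(0,2)$ and $A_{ij}=B_{ij}+B_{ji}\in N_\C(0,1)$ for $i<j$, and these entries are jointly Gaussian with vanishing cross-covariances, so $A\in\mathcal{G}$. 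This is exactly the construction in \eqref{def:RandMatFun}, and it shows that $\mathcal{G}$ is the law of $B+B^T$.

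\textbf{Step 1 (unitary invariance of the auxiliary ensemble).} I would show that $U^T B U$ has the same distribution as $B$. The joint density of $B$ is proportional to $\exp(-2\|B\|_F^2)$, which depends only on the Frobenius norm. Using $U^T\bar U = \overline{U^*U} = I$, one checks
\begin{equation*}
\|U^T B U\|_F^2 = \operatorname{tr}(U^* B^* \bar U U^T B U) = \operatorname{tr}(B^*B)=\|B\|_F^2,
\end{equation*}
so the density is invariant. It remains to verify that the linear map $B\mapsto U^T B U$ preserves Lebesgue measure on $\Cnxn\cong \R^{2n^2}$. Vectorizing gives $\operatorname{vec}(U^T B U)=(U^T\otimes U^T)\operatorname{vec}(B)$, and $U^T\otimes U^T$ is unitary (its adjoint is $\bar U\otimes\bar U$ and $(U^T\otimes U^T)(\bar U\otimes \bar U)=(U^T\bar U)\otimes(U^T\bar U)=I$), so the real Jacobian determinant has modulus one. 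Combining these two facts yields $U^T B U \stackrel{d}{=} B$.

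\textbf{Step 2 (push-forward).} Since symmetrization commutes with the unitary conjugation, I write
\begin{equation*}
U^T A U = U^T(B+B^T)U = (U^T B U) + (U^T B U)^T = \tilde B + \tilde B^T,
\end{equation*}
where $\tilde B := U^T B U$. By Step~1, $\tilde B\stackrel{d}{=}B$, so $\tilde B+\tilde B^T \stackrel{d}{=} B+B^T = A$, and therefore $U^T A U \in \mathcal{G}$.

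\textbf{Main obstacle.} The only place that requires care is Step~1: one must simultaneously check the invariance of the Gaussian density (via preservation of $\|B\|_F^2$) and the triviality of the real Jacobian of a $\C$-linear map, which is why the Kronecker/vectorization argument is convenient. Once that is in place, everything else is immediate because the symmetrization $B\mapsto B+B^T$ is equivariant under $B\mapsto U^T B U$.
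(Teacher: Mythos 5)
Your proof is correct, but it takes a genuinely different route from the paper. The paper argues entrywise: it expands $(U^TAU)_{ii}$ and $(U^TAU)_{ij}$ in terms of the independent entries $A_{hk}$, $h\le k$, and computes the variances directly, reducing them to $2\,\lVert U_{:,i}(U_{:,i})^*\rVert_F^2=2$ and $\tfrac12\lVert U_{:,i}(U_{:,j})^T+U_{:,j}(U_{:,i})^T\rVert_F^2=1$ via orthonormality of the columns of $U$. You instead realize $\mathcal{G}$ globally as the push-forward of the i.i.d.\ ensemble $B$ (density $\propto e^{-2\lVert B\rVert_F^2}$) under symmetrization, and combine the invariance of $\lVert\cdot\rVert_F$ under $B\mapsto U^TBU$ with the unimodularity of the real Jacobian of this $\C$-linear map and the equivariance $U^T(B+B^T)U=\tilde B+\tilde B^T$. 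Your argument actually proves more than the paper's: it gives $U^TAU\stackrel{d}{=}A$ as a full equality of laws, so in particular the transformed upper-triangular entries remain independent circularly symmetric Gaussians — a point the paper's variance-only check does not literally establish (and which is needed if $\mathcal{G}$ is understood as an ensemble rather than a list of marginal variances). What the paper's computational route buys in exchange is reusability: the same entrywise expansion is recycled in the remark following the theorem to evaluate $\var\left((U^*AU)_{ii}\right)$ and thereby show that $\mathcal{G}$ is \emph{not} invariant under $A\mapsto U^*AU$, a negative statement that your global density argument does not directly produce (indeed $\lVert U^*BU\rVert_F=\lVert B\rVert_F$ as well, but $U^*(B+B^T)U$ is no longer a symmetrization of $U^*BU$, so the equivariance in your Step 2 is precisely what fails there). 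Both proofs are sound; yours is cleaner and stronger for the stated theorem.
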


\begin{proof} Let $A\in\mathcal{G}$. It is straightforward to see that all entries of $U^TAU$ are normally distributed with mean zero. Let us compute their variances.
\smallbreak
\noindent \underline{Diagonal entries}: Let $1\le i\le n$. Then we have:
\begin{equation*}
 \left( U^TAU \right)_{ii} =\sum_{h,k=1}^n A_{hk}U_{hi}U_{ki}=\sum_{h=1}^n A_{hh}U_{hi}^2+
 2 \sum_{\substack{h,k=1 \\ h<k}}^n A_{hk}U_{hi}U_{ki}\,.
\end{equation*}
Given that all entries along and above the diagonal of $A$ are independent, we have:
\begin{equation*}
\begin{split}
\var\left(\left(U^TAU\right)_{ii}\right) & =\sum_{h=1}^n \var\left(A_{hh}\right)\lvert U_{hi}\lvert^4 \,+\,
4 \sum_{\substack{h,k=1 \\ h<k}}^n \var\left(A_{hk}\right) \lvert U_{hi}\vert^2\lvert U_{ki}\lvert^2 =\\
 & = 2\left(\sum_{h=1}^n \lvert U_{hi}\lvert^4 \,+\,
2 \sum_{\substack{h,k=1 \\ h<k}}^n \lvert U_{hi}\vert^2\lvert U_{ki}\lvert^2 \right) = 2\, \lVert U_{:,i} \left(U_{:,i}\right)^*\lVert^2_F = 2\,,
\end{split}
\end{equation*}
where $U_{:,i}$ denotes the $i$-th column of $U$.
\smallbreak
\noindent \underline{Off-diagonal entries}: Let $i,j=1,\ldots,n$, $i\ne j$. We have:
\begin{equation*}
 \left(U^TAU\right)_{ij}  =\sum_{h,k=1}^n A_{hk}U_{hi}U_{kj}=\sum_{h=1}^n A_{hh}U_{hi}U_{hj}+
\sum_{\substack{h,k=1 \\ h<k}}^n A_{hk}\left(U_{hi}U_{kj}+U_{ki}U_{hj}\right)\,.
 \end{equation*}
 Just as before, we can compute the variance of $\left(U^TAU\right)_{ij}$:
\begin{equation*}
\begin{split}
\var\left(\left(U^TAU\right)_{ij}\right) & =\sum_{h=1}^n \var\left(A_{hh}\right)\lvert U_{hi}\lvert^2\lvert U_{hj}\lvert^2 \,+\,
\sum_{\substack{h,k=1 \\ h<k}}^n \var\left(A_{hk}\right) \lvert U_{hi}U_{kj}+U_{ki}U_{hj} \lvert^2 =\\
 & = 2\sum_{h=1}^n \lvert U_{hi}\lvert^2\lvert U_{hj}\lvert^2 \,+\,
\frac{1}{2} \sum_{\substack{h,k=1 \\ h\ne k}}^n \lvert U_{hi}U_{kj}+U_{ki}U_{hj} \lvert^2 =\\
& = \frac{1}{2}\, \lVert U_{:,i} \left(U_{:,i}\right)^T+
U_{:,j} \left(U_{:,j}\right)^T\lVert^2_F = 1\,,
\end{split}
\end{equation*}
\end{proof}
\begin{rem} In a sense,
the ensemble $\mathcal{G}$ is a complex symmetric analog of two classes of random matrices that have been extensively studied: the Gaussian orthogonal and unitary ensembles (GOE/GUE), see \cite{Mehta}. 
It is a well known fact that matrices in the GOE and GUE ensembles are invariant under the similarities $A\mapsto U^TAU$ with $U$ orthogonal and $A\mapsto U^*AU$ with $U$ unitary, respectively. Computations similar to those performed in the proof of Theorem \ref{thm:Invariance} yield the following identities for given $A\in\mathcal{G}$ and unitary $U\in\Cnxn$:
\begin{equation*}
\begin{split}
 \var\left(\left(U^*AU\right)_{ii}\right) & = 2\left( 1- \lVert \Img\left(  U_{:,i} \left(U_{:,i}\right)^* \right) \lVert^2_F \right),\ \text{ for all } i=1,\ldots,n\,. \\
 \var\left(\left(U^*AU\right)_{ij}\right) & = 1,\text{ for all } i, j=1,\ldots,n,\,\text{ with } i\ne j.
\end{split}
\end{equation*}
This shows that $\mathcal{G}$ is not invariant under the unitary similarity $A\mapsto U^*AU$ unless $U$ is real, in which case we have $U^*=U^T$ and fall back into the case of Theorem \ref{thm:Invariance}. 
\end{rem}

We now report on the results of our experiments. We carried out two different sets of experiments, aimed at counting the number of points within a given region $\Omega$ where, respectively, a pair of singular values coalesces and a loss of rank occurs. To study the coalescence of singular values, we built 10 realizations of random matrix functions defined as in \eqref{def:RandMatFun} for each of the following dimensions: $50, 60, \ldots, 120$. For the losses of rank, we built 20 realizations for each of the dimensions $50, 100, \ldots, 500$. 
Because of periodicity of $A(x,y)$, we took the domain
$\Omega=[0, 2\pi]\times [0, \pi]$.  We divided $\Omega$ into a uniform grid made of $1024\times 512$ square boxes and, for each box, we established the presence of a coalescing point of singular values by numerically verifying the change of sign expressed  through equation \eqref{CI1} in Theorem \ref{MainThm}. 
In practice, the verification is based upon the computation of a smooth Takagi factorization around the sides of each box through the algorithm described in Section \ref{PredCorrAlgo}. For the losses of rank, we actually chose a coarser grid of $128\times 64$ boxes, since in this case we observed a much smaller number of events and
had to increase the dimension of the problems in order to obtain statistically relevant data. 
To establish the occurrence of a loss of rank, we relied on the same tools described above but counted only the coalescing points for the pair of eigenvalues $(\lambda_n, \lambda_{n+1})$ of the $2n\times 2n$ matrix function $M(x,y)$ defined in \eqref{Mmat}. 
In practice, however, we did not work with the double--size matrix $M$, but made use of 
the fact that coalescence for this pair of eigenvalues of $M$ is betrayed by an odd 
number of columns of $U$ changing sign upon continuation around the boundary
of a box, see Remark \ref{rem:multiple_degeneracy}.
All computations have been performed on PACE, the HPC infrastructure at the Georgia Institute of Technology, and required a total of about 9K CPU-hours.

The results of the experiments are illustrated in Figure \ref{fig:coal_bestfit}. 
They suggest that the number of coalescing points of singular values grows
{\bf quadratically} in the size of the problem, and that the number of points
where there is a loss of rank grows {\bf linearly}.

\begin{figure}[ht!]
	\begin{center}
		\includegraphics[width=.99\textwidth]{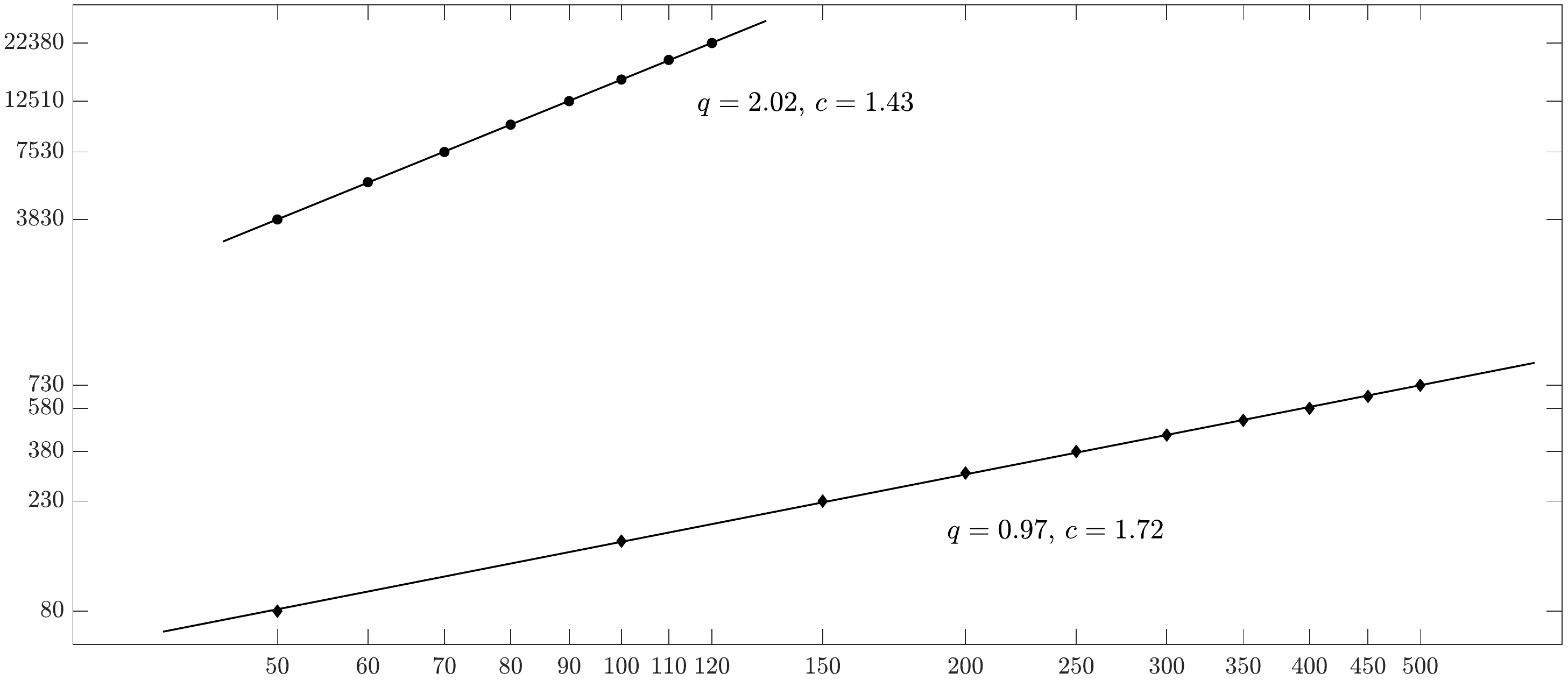}
		\caption{The figure shows the results of a ``log--log least squares'' performed on the data obtained from our experiments (circles for number of coalescences of singular values, diamonds for losses of rank). It also shows the parameters $c$ and $q$ obtained by best fit for the power law: $$\text{ \# of points } = c\,n^q, \text{ where } n \text{ is the matrix dimension.}$$}
		\label{fig:coal_bestfit}
	\end{center}
\end{figure}

\begin{rem} In the physics literature (mostly quantum mechanics), 
for parameter dependent random matrices of a certain ensamble, 
the ``density of degeneracies'' is a probability distribution that provides the 
expected number of points of coalescence per unit of measure in parameter space. 
From the density of degeneracies, one can compute the total number of points of degeneracy expected to be found within a given domain in terms of the size of the matrix functions. 
For the GOE and GUE ensembles, the density of degenerate eigenvalues has been computed in \cite{WA} and \cite{WSW} respectively, and found to be proportional to the square (respectively, cube) of the density of eigenvalues.
\end{rem}

We recall that, in the present work, the term ``degeneracy'' may refer either to coalescence of singular values or to loss of rank. Under fair working assumptions, and through an empirical argument similar to one used in \cite{DPP1}, we make the following:

\begin{claim}\label{claim:powerlaw}
Consider a $n\times n$ random matrix depending on smoothly depending on two or more parameters. Assume that,
for all values of the parameters, its singular values are asymptotically distributed according to the {\bf quarter-circle distribution}
\begin{equation}\label{eq:quartercirc}
\rho(\sigma)=\frac{\sqrt{2}}{\pi}\sqrt{n-\sigma^2/8},\,\quad 
\sigma\in\left[0, \sqrt{8n}\right]\ .
\end{equation}

Assume that the density of coalescing singular values is given by $c\,\rho(\sigma)^p$, where $c>0$ is a constant.  Then,
the expected number of points of coalescence for the singular values per unit of measure in parameter space is asymptotic to $n^{\frac{p}{2}+1}$, while the expected number of points of coalescence involving a particular pair of consecutive singular values is asymptotic to $n^\frac{p}{2}$. Moreover, the number of points of loss of rank is also asymptotic to $n^\frac{p}{2}$.
 \end{claim}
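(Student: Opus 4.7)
The plan is to combine the quarter--circle ansatz for $\rho$ with the density $c\rho(\sigma)^p$ through a single scaling in $\sigma$, and to treat the three assertions respectively as: a single--pair evaluation, a sum over pairs approximated by a $\rho$--weighted integral, and a specialization of the first to the pair of $M$ in \eqref{Mmat} responsible for a vanishing singular value.

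\textbf{Step 1: scaling of $\rho$.} I would first record what the quarter--circle density becomes under $\sigma=\sqrt{8n}\,t$, $t\in[0,1]$:
\begin{equation*}
\rho(\sigma)=\frac{\sqrt{2n}}{\pi}\sqrt{1-t^2},\qquad d\sigma=\sqrt{8n}\,dt.
\end{equation*}
The key qualitative consequence is that $\rho(\sigma)$ is of order $\sqrt{n}$ throughout the bulk of the spectrum, and in particular $\rho(0)=\sqrt{2n}/\pi$ is also of order $\sqrt{n}$.

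\textbf{Step 2: per--pair count and loss of rank.} Interpreting $c\rho(\sigma^*)^p$ as the expected number of coalescences of a given consecutive pair $(\sigma_j,\sigma_{j+1})$ per unit measure in parameter space, with $\sigma^*$ the value where they meet, I would evaluate this at a typical bulk $\sigma^*$ to obtain the per--pair estimate of order $n^{p/2}$. For the loss of rank, the correspondence \eqref{Mmat} shows this is precisely the coalescence of the distinguished pair $(\lambda_n,\lambda_{n+1})=(\sigma_n,-\sigma_n)$ of $M$, which by symmetry occurs at $\sigma^*=0$. Applying the same formula at $\sigma^*=0$ and using $\rho(0)$ of order $\sqrt{n}$ again gives order $n^{p/2}$.

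\textbf{Step 3: total count.} Summing the per--pair contribution over all $n-1$ consecutive pairs and invoking the fact that the $\sigma_j$'s are themselves distributed with density $\rho$, I would replace the discrete sum by the integral
\begin{equation*}
c\int_0^{\sqrt{8n}}\rho(\sigma)^{p+1}\,d\sigma.
\end{equation*}
The Step 1 scaling turns this into $n^{(p+1)/2}\cdot n^{1/2}$ times a finite Beta--function constant, yielding the desired order $n^{p/2+1}$.

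\textbf{Main obstacle.} The delicate point is Step 3, where the discrete sum over pairs is replaced by the $\rho$--weighted integral: a rigorous version would require a uniform (in $j$) form of the density ansatz together with an equidistribution statement for the coalescence values, neither of which is elementary. This is precisely why the statement appears as a claim rather than a theorem, and the argument proceeds by the same empirical route as in \cite{DPP1}.
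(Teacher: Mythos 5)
Your argument is correct and is essentially the paper's own: the paper defines quantile points $s_k$ with $\int_{s_k}^{\sqrt{8n}}\rho =k$, posits that the pair $(\sigma_k,\sigma_{k+1})$ coalesces at $s_k$ so that the per-pair count is $c\,\rho(s_k)^p\sim n^{p/2}$, and sums over $k$ to get $n^{p/2+1}$ — your integral $c\int_0^{\sqrt{8n}}\rho(\sigma)^{p+1}\,d\sigma$ is exactly the continuum version of that discrete sum, and your evaluation at $\sigma^*=0$ for the loss-of-rank pair $(\sigma_n,-\sigma_n)$ of $M$ matches the paper's appeal to the double-size problem. Both arguments share the same heuristic gaps (equidistribution of coalescence locations, bulk versus edge behaviour of $\rho(s_k)$), which is precisely why the statement is presented as a claim with ``evidence'' rather than a theorem.
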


\begin{proof}[Evidence]
Below we present an empirical argument in support of Claim \ref{claim:powerlaw}.
From \eqref{eq:quartercirc}, for the density of singular values $\rho$ we have:
\begin{equation*}
\int_0^{\sqrt{8n}} \rho(\sigma)\, d\sigma = n,
\end{equation*}
for all $n>0$. For any $n>2$ and any $0\le k\le n$, let us consider $s_k$ such that
\begin{equation*}
\int_{s_k}^{\sqrt{8n}} \rho(\sigma)\, d\sigma = k.
\end{equation*}
Then we expect each interval $[s_{k},s_{k-1}]$ to contain exactly 1 singular value of the matrix function, and also expect coalescence for a pair of consecutive singular values to occur at the boundaries of each interval. That is, we expect $\sigma_k=\sigma_{k+1}=s_k$ for $k=1,2,\ldots,n-1$. Therefore, given our assumption on the expression of the density of degeneracies, the expected number of coalescing points involving the pair $(\sigma_k,\sigma_{k+1})$ is $c\,\rho(s_k)^p=c\left(\frac{\sqrt{2}}{\pi}\right)^p\left(n-s_k^2/8\right)^\frac{p}{2}$, which is asymptotic to $n^{\frac{p}{2}}$, and the expected number of coalescing points involving any pair of singular values is:
\begin{equation*}
 c\,\sum_{k=1}^{n-1} \rho(s_k)^p=c\left(\frac{\sqrt{2}}{\pi}\right)^p\, \sum_{k=1}^{n-1}\left(n-s_k^2/8\right)^\frac{p}{2},
\end{equation*}
which is asymptotic to $n^{\frac{p}{2}+1}$. The reasoning on the asymptotic for the number of rank losses follows the same line, except one needs to consider the eigenvalues of the double-size problem \eqref{Mmat}.
\end{proof}

Unfortunately, we could not find any reference for the density of singular values of
random complex symmetric matrices in $\mathcal{G}$, let alone the relation between
density of degenerate singular values and the density of singular values in such case. 
Yet, extensive numerical experiments strongly suggest that the singular values of 
matrices in $\mathcal{G}$ are asymptotically distributed according to the quarter-circular distribution \eqref{eq:quartercirc}. 
In fact, this is known to be the case for general random Gaussian matrices, and was established by Marchenko and Pastur in \cite{MP}. In the GOE/GUE cases, it is a consequence of the well known Wigner's semicircular law. Recently, it has been proved also for the so-called \emph{checkerboard} matrices, see \cite{Miller}.

By virtue of Claim \ref{claim:powerlaw}, the results of our experiments can be used
in support of the following:

\begin{claim}
For the random matrix functions $A(x,y)$ defined in \eqref{def:RandMatFun}, the density of coalescings for the singular values is proportional to $\rho(\sigma)^2$, where $\rho$ is the density of singular values defined in \eqref{eq:quartercirc}. 
\end{claim}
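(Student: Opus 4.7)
My plan is to combine the empirical asymptotic rates obtained from the numerical experiments with the theoretical scaling provided by Claim \ref{claim:powerlaw}, under the standing assumption that the singular values of matrices drawn from $\mathcal{G}$ are asymptotically quarter-circle distributed as in \eqref{eq:quartercirc}. The argument will extract the exponent $p$ in the postulated density law $c\,\rho(\sigma)^p$ in two independent ways, which supplies a built-in consistency check and pins the exponent to $p=2$.

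First I would invoke Claim \ref{claim:powerlaw} with $p$ left unspecified: under the quarter-circle hypothesis, if the density of degenerate singular values has the form $c\,\rho(\sigma)^p$, then for a fixed parameter domain the expected number of coalescence points of singular values is asymptotic to $n^{p/2+1}$, while the expected number of rank-loss points is asymptotic to $n^{p/2}$.

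Next I would read off the empirically observed exponents from Figure \ref{fig:coal_bestfit}. The log--log least-squares fits reported there yield a growth in $n$ that is essentially quadratic for the number of coalescences and essentially linear for the number of rank losses. Matching with the predicted exponents, I would set $p/2 + 1 = 2$ from the coalescences and $p/2 = 1$ from the losses of rank; both equations give $p = 2$. Substituting back into $c\,\rho(\sigma)^p$ yields the claimed proportionality to $\rho(\sigma)^2$, and the agreement between the two independent determinations of $p$ is itself part of the evidence for the claim.

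The main obstacle is that this argument is evidence-based rather than rigorous. It rests on three ingredients that are not established analytically in the present paper: the quarter-circular asymptotics for the singular value distribution of matrices in $\mathcal{G}$ (only numerically supported, as noted after Claim \ref{claim:powerlaw}), the heuristic derivation of Claim \ref{claim:powerlaw} itself, and the interpretation of the numerically fitted power-law exponents as the exact integers $2$ and $1$. A fully rigorous proof would require establishing the quarter-circle law for $\mathcal{G}$ and \emph{deriving}, rather than postulating, the functional form $c\,\rho(\sigma)^p$ of the density of degeneracies; pending such a derivation, the redundancy between the two matching conditions is the strongest piece of internal evidence one can offer for $p=2$.
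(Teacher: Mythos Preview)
Your proposal is correct and follows precisely the argument the paper intends: the paper offers no separate proof for this claim, but introduces it with the sentence ``By virtue of Claim~\ref{claim:powerlaw}, the results of our experiments can be used in support of the following,'' which is exactly the matching-of-exponents evidence you spell out. Your explicit write-up, including the consistency check between the coalescence exponent ($p/2+1=2$) and the rank-loss exponent ($p/2=1$), and your caveats about the non-rigorous, empirical nature of the support, faithfully unpack what the paper leaves implicit.
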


\section{Conclusions}

In this work, we considered the Takagi factorization of a matrix valued function depending on
parameters.  Our main interest was in being able to locate degenerate (i.e., double or zero) 
singular values in  parameter space.  We proved that having a generic coalescing of singular values is
a co-dimension 2 phenomenon, in contrast to the case of equal singular values of a general
complex matrix for which the codimension is 3. At the same time, losses of rank remain codimension 2 phenomena. We further derived a system of differential equations satisfied by a smooth
factorization along 1-d paths, and proposed an algorithm of predictor-corrector type that
exploits these differential equations.  Finally, we implemented a method to locate the
occurrence of degeneracies and made a numerical study of the density of such degeneracies,
discovering that the power law governing the number of coalescing singular values
is of the type $c_1n^2$ whereas the one governing the number of losses of rank is
of the type $c_2n$, for appropriate constants $c_1, c_2$.

\section*{Acknowledgements}
This research was supported in part through research cyberinfrastructure resources and services provided by the Partnership for an Advanced Computing Environment (PACE) at the Georgia Institute of Technology, Atlanta, Georgia, USA. The patronage of INdAM-GNCS is also acknowledged.

\end{document}